\def\ps@pprintTitle{%
	\let\@oddhead\@empty
	\let\@evenhead\@empty
	\def\@oddfoot{\footnotesize\itshape
		\hfill\today}%
	\let\@evenfoot\@oddfoot}
\newtheorem{assumption}{Assumption}
\pgfplotsset{compat=1.18}
\Crefname{assumption}{Assumption}{Assumptions}
\theoremstyle{definition}
\newtheorem{definition}{Definition}[section]
\theoremstyle{plain}
\newtheorem{theorem}[definition]{Theorem}
\newtheorem{lemma}{Lemma}[section]
\newtheorem{remark}[definition]{Remark}
\theoremstyle{definition}
\newcommand{\SDEdim}{\ensuremath{n}}
\DeclareMathOperator{\diff}{d}
\newcommand{\ds}{{\diff}s}
\newcommand{\dt}{{\diff}t}
\newcommand{\du}{{\diff}u}
\newcommand{\dv}{{\diff}v}
\newcommand{\dU}{{\diff}U}
\newcommand{\dB}{{\diff}B}
\newcommand{\tNum}{N}
\newcommand{\tone}{t}
\newcommand{\ttwo}{\tilde{t}}
\newcommand{\eps}{\varepsilon}
\begin{document}
\begin{frontmatter}

\title{Exponential Euler method for stiff stochastic differential equations with additive fractional Brownian noise}


\author[mymainaddress]{Minoo Kamrani\texorpdfstring{$^{*}$}{}}{\corref{mycorrespondingauthor}}
\ead{m.kamrani@razi.ac.ir}
\cortext[mycorrespondingauthor]{Corresponding author}
\author[mysecondaryaddress]{Kristian Debrabant}{}
\ead{debrabant@imada.sdu.dk}

\author[mymainaddress]{Nahid Jamshidi}{}
\ead{n.jamshidi.math@gmail.com}

\address[mymainaddress]{Department of Mathematics, Faculty of Science, Razi University, Kermanshah, Iran}
\address[mysecondaryaddress]{University of Southern Denmark, Department of Mathematics and Computer Science (IMADA), Campusvej 55, 5230 Odense M, Denmark}


\begin{abstract}
We discuss a system of stochastic differential equations with a stiff linear term and additive noise driven by fractional Brownian motions (fBms) with Hurst parameter $ H>\frac{1}{2}$, which arise e.\,g.,\ from spatial approximations of stochastic partial differential equations. For their numerical approximation, we present an exponential Euler scheme and show that it converges in the strong sense with an exact rate close to the Hurst parameter $H$. Further, based on \cite{buckwar11tns}, we conclude the existence of a unique stationary solution of the exponential Euler scheme that is pathwise asymptotically stable.
\end{abstract}

\begin{keyword}
Stiff stochastic differential equations; Fractional Brownian motion; Exponential Euler scheme; Pathwise stability.

\end{keyword}

\end{frontmatter}

\section{Introduction}\label{intro}
Fractional Brownian motion (fBm) is a stochastic process that was first introduced within a Hilbert space framework by Kolmogorov  and which generalizes the standard Brownian motion. It is a zero-mean Gaussian process with covariance
\begin{equation*}
Cov(B^H_t,B^H_s)=\mathbb{E}[B^H_t B^H_s]=\frac{1}{2}\left(t^{2H}+s^{2H}-|t-s|^{2H}\right).
\end{equation*}
When $ H=\frac 12$  fBm becomes the standard Brownian motion. However, when $ H\neq \frac 12 $ fBm behaves differently from the standard Brownian motion. For example, for $H\in(0,\frac12)$, fBm has infinite quadratic variation, while for $H\in(\frac12,1)$ its quadratic variation vanishes. Moreover, fBm is neither a semimartingale nor a Markov process, so the powerful tools from the classical theories of stochastic analysis are not applicable.

FBms have many applications in models arising in engineering, physics, telecommunication networks, climate science and finance, where the systems exhibit randomness and long-range dependence \cite{Abry,Mandelbrot}.

Since many stochastic differential equations (SDEs) driven by fBm cannot be solved explicitly, their numerical approximation is important. Various approximation schemes have been introduced for SDEs with fBm   \cite{ hong2020optimal,kamrani2015numerical, MR2456334, MR2474352}. For efficient approximation, schemes that can reproduce the essential dynamics of the exact solution are of interest. In the frequent situation that the essential dynamics can be represented by linear terms, methods that treat these terms explicitly / solve linear differential equations exactly are well discussed for ordinary differential equations, see, e.g.\ \cite{cox02etd, maday90aoi} and references therein, as well as for stochastic differential equations driven by Brownian motion, see, e.g., \cite{carbonell08wll, jimenez99sos} rsp.\ \cite{arara19sbs, debrabant21rkl,debrabant22lsf,erdogan19anc,tambue16wcf,yang19sps} and references therein for local linearization methods rsp.\ exponential integrators.

In this article, we consider the numerical approximation of a system of SDEs with a linear stiff term and additive noise driven by fBm of the form
\begin{equation}\label{s1}
 \begin{split}
 \dU_t&=\underbrace{\left(AU_t+f(t,U_t)\right)}_{=:g(t,U_t)}\dt+\sum_{i=1}^m b_i(t) \dB^{H}_i(t),~~~t\in[t_0,T],\\
 {U}_{t_0}&=u_0,~~~ u_0\in\mathbb{R}^\SDEdim,
 \end{split}
 \end{equation}
where the stochastic processes $ B^{H}_i,  i=1,\dots,m,$ are independent two-sided fBms  (i.\,e.,\ defined for all $ t\in \mathbb{R} $) with Hurst parameter $ \frac{1}{2}<H<1 $ (the quadratic variation vanishes), $ f\in C([t_0,T]\times\mathbb{R}^\SDEdim,\mathbb{R}^\SDEdim) $,  $A\in\mathbb{R}^{\SDEdim,\SDEdim}$ and $ b_i:[t_0,T]\to \mathbb{R}^{\SDEdim}$ for $i=1, \dots ,m$, $\SDEdim\in\mathbb{N}$. Systems of the form \eqref{s1} arise, for example, as semidiscretizations of stochastic diffusion-reaction systems of the form $\du=(u_{xx}+f(t,u))\dt+b(t,x)\dB^{H}$, one of the simplest examples being the stochastic heat equation with additive, long-range dependent noise.
 \Cref{s1} is considered as a pathwise Riemann-Stieltjes integral equation. Note that the existence of a unique stationary and attracting solution for SDE \eqref{s1} with Hurst parameter $ H> \frac{1}{2} $ follows from \cite{MR2095071}.
As $f$ can also contain linear parts, the choice of $A$ and $f$ in representation \eqref{s1} is not unique for given $g$.

 We are interested in the case that the drift term $g(t,U_t)$ in \eqref{s1} is stiff, and $A$ and $f$ can be chosen such that only the linear part $ AU_t $ is stiff, i.\,e.,\ $|A|(T-t_0)\gg1$ and $\mu[A](T-t_0)\ll|A|(T-t_0)$ where $ |\cdot| $ denotes the Euclidean norm and $\mu(\cdot)$ is the corresponding logarithmic matrix norm, see e.g.\ \cite{dekker84sor,strehmel2012numerik}, while th{e nonlinear part $ f(t,U_t) $ is non-stiff. This type of problem often arises from spatial approximations of stochastic partial differential equations by Galerkin, finite difference, or finite element approximations \cite{MR3554794, MR3651090}, where $A$ could e.\,g.\ correspond to the discretized differential operator in space. Note however that the concrete choice of $A$ has significant influence on the long time integration error.
  In the situation that there is no noise, exponential integrators have been proven to be a very interesting class of numerical time integration methods \cite{hochbruck10ei}, as they handle the stiff part exactly and can thus be stable even when being explicit.

In this paper, we will extend one of the most straightforward exponential schemes, the exponential Euler scheme, to the SDE driven by fractional Brownian motion \eqref{s1}.
In contrast to other explicit schemes that are applicable to \eqref{s1}, like the (standard) Euler method (\cite{neuenkirch06oao}) or Milstein type schemes (\cite{nourdin05sda}), the exponential Euler method does not suffer from severe step size restrictions when applied to stiff SDEs, which can make the aforementioned methods in practice unusable. The price for this is that one needs to calculate matrix exponentials and simulate the appearing stochastic integrals (see \cref{pre}). \label{ref:1}

We will prove that by avoiding the presence of $|A|$ in any error bound, the (stiff) convergence rate of the exponential Euler scheme can be shown to equal the Hurst parameter $H$.

  In the second part of the paper, the pathwise stability properties of the established scheme are considered. There are very few papers dealing specifically with the stability of numerical schemes for nonlinear SDEs driven by fBm. Some related papers look at a similar issue for nonlinear SDEs driven by Brownian motion; see \cite{caraballo06tpn,MR2543841}. These papers discuss the implicit Euler-Maruyama method and find that it has a unique stochastic stationary solution that attracts all other solutions pathwise in the pullback and the forward sense.

 In \cite{MR2524684}, the authors proved that under a one-sided dissipative condition, both the stochastic system \eqref{s1} with $ A\equiv 0 $ and any Hurst parameter $ H\in(0,1)$ and its discretization with the drift-implicit Euler method  have a unique stationary solution, which pathwise attracts all other solutions. Moreover, they showed that the latter converges to the first one as the step size tends to zero.

  Buckwar et al.\ \cite{buckwar11tns} considered for the Brownian motion case with $ H=\frac{1}{2} $ the stability properties of the $ \theta $-Maruyama method, a linear implicit and two exponential Euler schemes. They proved for each of these methods, there exists a unique stochastic stationary solution that is pathwise asymptotically stable. As discussed in Section \ref{stability}, their proof carries forward to the stability of the exponential Euler scheme applied to the semi-linear SDE \eqref{s1} driven by fractional Brownian motion with Hurst parameter $ \frac{1}{2}<H<1 $.

 The remaining part of this article is structured as follows: In Section \ref{pre} we introduce the exponential Euler method and make some assumptions on the function $ f(t,U_t) $ and the matrix $ A $ in SDE \eqref{s1}. Convergence results are established in Section \ref{convergence}. Section \ref{stability} is devoted to the pathwise stability analysis of our numerical method, and finally, in Section \ref{numeric} we present a numerical example.
\section{Setting of the problem}\label{pre}
This section contains some assumptions on the SDE \eqref{s1} and the numerical method.
The solution of \cref{s1} can be represented as \cite{MR1912142, MR2095071}
 \begin{equation}\label{s2}
 U_t=e^{A(t-t_0)}u_0+\int_{t_0}^te^{A(t-s)}f(s,U_s)\ds+\sum_{i=1}^m \int_{t_0}^te^{A(t-s)}b_i(s)\dB^{H}_i(s),\qquad t\in[t_0,T],
 \end{equation}
 where $U_{t_0}=u_0$ and the integral with respect to $ B^H $ is a pathwise Riemann-Stieltjes integral  \cite{MR2493996, MR2150381}.

Let $\phi(u,v)=H(2H-1)|u-v|^{2H-2}, u,v>0,$ and  $f:\mathbb R^+ \to \mathbb R$ be a Borel measurable (deterministic) function. The
 function $f$ belongs to the Hilbert space $L^2_{\phi}(\mathbb R^+)$
 if
 $$|f|^2_{\phi}=\int_{0}^{\infty}\int_{0}^{\infty} f(u)f(v) \phi(u,v) \du \dv <\infty.$$
 By the following lemma the stochastic integral with respect to fractional Brownian motions for deterministic kernels is  defined.
 \begin{lemma}\label{lemmacov}\cite{MR1741154}
If $f,g \in L^2_{\phi}(\mathbb R^+),$ then $\int_0^{\infty} f(u) \dB^H(u)$ and $\int_0^{\infty} g(u) \dB^H(u)$
 are well defined
 zero mean, Gaussian random variables such that
 \begin{equation}
\mathbb E\Big(\int_0^{\infty} f(u) dB^H(u) \int_0^{\infty} g(u) \dB^H(u) \Big)=\int_0^{\infty} \int_0^{\infty} f(u) g(v)\phi(u,v) \du \dv.
 \end{equation}
 \end{lemma}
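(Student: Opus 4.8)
The plan is to construct the Wiener integral $\int_0^\infty f(u)\,\dB^H(u)$ for deterministic kernels by the classical isometry argument: define it first on step functions, establish the stated covariance identity there, and then extend by density. Concretely, I would set $\int_0^\infty \mathbf{1}_{[0,t]}(u)\,\dB^H(u) := B^H_t$ and extend linearly to simple functions $f=\sum_{k} a_k \mathbf{1}_{[0,t_k]}$; each resulting integral is a finite linear combination of the jointly Gaussian variables $B^H_{t_k}$ and is therefore itself a zero-mean Gaussian random variable.

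The heart of the matter is to verify that on simple functions the map $f\mapsto \int f\,\dB^H$ is an isometry from $(L^2_\phi,|\cdot|_\phi)$ into $L^2(\Omega)$, which is exactly where the specific kernel $\phi(u,v)=H(2H-1)|u-v|^{2H-2}$ enters. Writing the fBm covariance as $R(s,t)=\frac12\big(s^{2H}+t^{2H}-|t-s|^{2H}\big)$, the key observation is that for $H>\frac12$ a direct differentiation gives $\frac{\partial^2 R}{\partial u\,\partial v}(u,v)=\phi(u,v)$. Since $R$ vanishes whenever one of its arguments is zero, the fundamental theorem of calculus yields $\mathbb E[B^H_t B^H_s]=R(s,t)=\int_0^t\!\int_0^s \phi(u,v)\,\du\,\dv$, i.e.\ the claimed identity for the pair of indicators $\mathbf{1}_{[0,t]},\mathbf{1}_{[0,s]}$. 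Extending bilinearly then gives $\mathbb E\big[\int f\,\dB^H\,\int g\,\dB^H\big]=\int_0^\infty\!\int_0^\infty f(u)g(v)\phi(u,v)\,\du\,\dv$ for all simple $f,g$, and in particular the isometry $\mathbb E[(\int f\,\dB^H)^2]=|f|^2_\phi$.

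With the isometry in hand on a dense subset, I would extend by continuity: for general $f\in L^2_\phi$ pick simple $f_n\to f$ in $|\cdot|_\phi$; by the isometry $(\int f_n\,\dB^H)_n$ is Cauchy in $L^2(\Omega)$, and I define $\int f\,\dB^H$ as its $L^2(\Omega)$-limit. This limit does not depend on the chosen sequence, is again a zero-mean Gaussian random variable (an $L^2$-limit of Gaussians is Gaussian), and the covariance identity passes to the limit either by continuity of the $L^2(\Omega)$ inner product or, equivalently, by polarizing the isometry.

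I expect the main obstacle to be the density step rather than the algebra. The space $L^2_\phi(\mathbb R^+)$ attached to the singular kernel $\phi$ is only an inner-product space whose completion need not consist of ordinary functions, so one must argue carefully that the simple functions are dense in the class of $f$ under consideration, in order for the continuity extension to be legitimate. This is precisely the technical point settled in \cite{MR1741154}, which I would cite; the remaining ingredients---linearity, Gaussianity, and passage to the limit---are then routine.
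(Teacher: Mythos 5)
The paper does not actually prove this lemma: it is stated as a quotation of a result from Duncan, Hu and Pasik-Duncan \cite{MR1741154}, so there is no internal proof to compare against. Your reconstruction is the standard construction underlying that citation, and it is correct: the computation $\frac{\partial^2 R}{\partial u\,\partial v}(u,v)=H(2H-1)|u-v|^{2H-2}=\phi(u,v)$ off the diagonal (where, for $H>\frac12$, $\phi$ is locally integrable), combined with $R(s,0)=R(0,t)=0$, yields the covariance identity on indicators; bilinearity extends it to simple functions; and an $L^2(\Omega)$-limit of centered Gaussian variables is again centered Gaussian, so the extension by continuity preserves both Gaussianity and the covariance formula via polarization. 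The one genuinely nontrivial point is exactly the one you flag: every $f$ with $|f|_\phi<\infty$ must be approximable by step functions in the $|\cdot|_\phi$-seminorm, which holds for $H>\frac12$ but is not obvious, since the abstract completion of the step functions is strictly larger than $L^2_\phi$ and contains elements that are not functions. Deferring that density argument to \cite{MR1741154} is legitimate and mirrors what the paper itself does; in effect your write-up supplies strictly more detail than the paper, which delegates the entire statement to the reference.
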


 For an integer $ N$ and the given time interval $ [t_0,T] $ we introduce a time discretization with time points $ t_0<t_1<\dots<t_\tNum=T  $, with possibly non-equidistant step sizes $ h_i=t_{i+1}-t_{i} $, $ 0\leq i \leq N-1 $, and denote $ h_{max}=\max_{0\leq i\leq N-1}h_i, h_{min}=\min_{0\leq i\leq N-1}h_i  $. \\
Assume $A$ is regular, then by generalizing the exponential Euler scheme for SDEs driven by Brownian motion \cite{jentzen09oto} to \eqref{s2}, we obtain for $k=0,1,\dots, \tNum-1$
 \begin{equation}\label{s3}
 V_{k+1}=e^{Ah_k}V_k+A^{-1}\left(e^{Ah_k}-I\right)f(t_k,V_k)+\sum_{i=1}^m \int_{t_k}^{t_{k+1}}e^{A(t_{k+1}-s)}b_i(s)\dB^{H}_i(s),
  \end{equation}
where $V_k$ is approximation to $U(t_k)$ for $k=1,\dots,N$ and $ V_0=u_0 $.
From Lemma \ref{lemmacov}, for $i=1,\dots,m$ and $k=0,1,\dots, \tNum-1$,
\[ I_{i,k}= \int_{t_k}^{t_{k+1}}e^{A(t_{k+1}-s)}b_i(s)\dB^{H}_i(s),\]
 are well-defined zero mean Gaussian random variables with covariance
  \begin{equation}\label{fBm}
  \begin{split}
  \mathbb{E}\left(I_{i,k}I_{j,l}^\top\right)=\delta_{i,j}H(2H-1)\int_{t_k}^{t_{k+1}}\int_{t_l}^{t_{l+1}}e^{A(t_{k+1}-u)}b_i(u)\left(e^{A(t_{l+1}-v)}b_j(v)\right)^\top|u-v|^{2{H}-2}\dv\du,
  \end{split}
  \end{equation}
  where $ \delta_{i,j}, i,j=1,\dots,m $ denotes the Kronecker delta.
  A Cholesky decomposition of this covariance matrix allows therefore to simulate \eqref{s3}.

  In the following, $ |.| $ denotes the Euclidean norm, while $ \|.\|=\left(\mathbb{E}[|.|^2]\right)^{\frac{1}{2}} $. Further, we will use the logarithmic matrix norm (that actually is not a norm, but a sublinear functional) defined by
  \begin{equation}\label{eq:mudef}
  \mu[B]=\lambda_{max}\left(\frac{B+B^T}{2}\right),
  \end{equation}
  for any square matrix $ B $. The logarithmic matrix norm satisfies \cite{dekker84sor} $ |\mu[B]|\leq |B| $ and
  \begin{equation} \label{boundExp}
  \left|\exp(Bt)\right|\leq \exp(\mu[B]t)~~\text{ for }~t\geq 0.
  \end{equation}
  For stiff problems, $|A|$ is typically very large, while $\mu[A]$ is of small absolute value and often negative. In the following, we will therefore ensure that all estimates are suitable for this type of problem, and thus avoid any estimate involving $|A|$.

   We will use the following assumptions on the functions $f$ and $b_i$ and the matrix $A$:
   \begin{assumption}\label{ass:f} The nonlinear function $f:~[t_0,T]\times\mathbb{R}^\SDEdim\to\mathbb{R}^\SDEdim$ is continuous and has linear growth condition, i.\,e.,\ there is a positive constant $ D $ such that for all $t \in [t_0,T]$ and $ x\in \mathbb{R}^\SDEdim $
   	\begin{equation}\label{bounded}
   		|f(t,x)|\leq D(1+|x|).
   	\end{equation}
   	 Further, $f$ is Lipschitz continuous, i.\,e.,\ there exists a positive constant $ K $  such that
  	\begin{equation}\label{lip}
  	|f(t,x)-f(s,y)|\leq K(|t-s|+|x-y|),\quad \forall t,s \in [t_0,T] , \quad  \forall x,y\in \mathbb{R}^\SDEdim.
  	\end{equation}
  \end{assumption}
  \begin{assumption}\label{ass:b}
  	The functions $b_i:[t_0,T]\to \mathbb{R}^\SDEdim , i=1,\dots,m$ are bounded with respect to the $|.|$, i.\,e.,
  	\begin{equation}\label{boundb}
  |b_i(t)|^2\leq M ~ \text{ for } t\in [t_0,T], i=1,\dots,m.
  	\end{equation}
  	 \end{assumption}
   \begin{assumption}\label{ass:A}
    There is a positive constant $L$ such that $A\in\mathbb{R}^{\SDEdim,\SDEdim}$ for all $t>s$ satisfies the following conditions
    \[
    |Ae^{A(t-s)}|\leq\frac{L}{t-s},\qquad |A^{-1}(I-e^{A(t-s)})|\leq L(t-s).
    \]
    Further, $\sup_{t_0\leq t\leq T}e^{\mu[A](t-t_0)}\leq C$ with a constant $C$ not too large.
   \end{assumption}
   Note that in applications, often $\mu[A]\leq0$, which then implies that $\sup_{t_0\leq t\leq T}e^{\mu[A](t-t_0)}\leq 1$.

A finite dimensional version of the following lemma can be used to show that \cref{ass:A} is fulfilled.
\begin{lemma}\label{lemmalord}\cite{MR3308418}
  Suppose that $H$ is a Hilbert space with inner product $\langle .,.\rangle$ and the
   linear operator $-A:D(A)\subset H \to H$ has a complete orthonormal set of eigenfunctions $\{\phi_j :j\in \mathbb N\}$
   and eigenvalues $\lambda_j>0$, ordered so that $\lambda_{j+1}\geq \lambda_j.$
   Then for $t\geq 0$, the
   exponential of $-tA$ which is defined by
   $$ e^{-tA}u=\sum_{j=1}^{\infty} e^{-\lambda_j t} \langle u,\phi_j\rangle \phi_j,$$
   is a semigroup of linear
   operators on $H.$ Therefore
   for each $\alpha\geq 0$, there exists a constant $K$ such that
      \begin{equation} \label{semi1}
   \|A^{\alpha} e^{-tA}\|_{L(H)}\leq K t^{-\alpha},~~t>0,
   \end{equation}
  and for $\alpha\in [0,1]$
   \begin{equation} \label{semi2}
  \|A^{-\alpha} (I-e^{-tA})\|_{L(H)}\leq K t^{\alpha}~~ t\geq 0.
   \end{equation}
   \end{lemma}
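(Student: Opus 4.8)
The plan is to prove all three claims by \emph{spectral calculus}, turning each operator estimate into an elementary one-variable inequality for the eigenvalues. Since $\{\phi_j:j\in\mathbb N\}$ is a complete orthonormal system, every $u\in H$ admits the expansion $u=\sum_j\langle u,\phi_j\rangle\phi_j$ with $\|u\|^2=\sum_j|\langle u,\phi_j\rangle|^2$, and on this basis both the fractional power and the exponential act diagonally, namely $A^{\alpha}u=\sum_j\lambda_j^{\alpha}\langle u,\phi_j\rangle\phi_j$ and $e^{-tA}u=\sum_j e^{-\lambda_j t}\langle u,\phi_j\rangle\phi_j$. The semigroup property is then immediate termwise: from $e^{-\lambda_j(s+t)}=e^{-\lambda_j s}e^{-\lambda_j t}$ and $e^{-\lambda_j\cdot 0}=1$ one reads off $e^{-(s+t)A}=e^{-sA}e^{-tA}$ and $e^{-0\cdot A}=I$, while $\lambda_j>0$ gives $e^{-\lambda_j t}\le 1$, so each $e^{-tA}$ is a contraction and the family $\{e^{-tA}\}_{t\ge0}$ is a (strongly continuous) semigroup of bounded linear operators on $H$.

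For the first estimate \eqref{semi1} I would write, for $t>0$,
\[
\|A^{\alpha}e^{-tA}u\|^2=\sum_{j}\lambda_j^{2\alpha}e^{-2\lambda_j t}\,|\langle u,\phi_j\rangle|^2\le\Big(\sup_{x>0}x^{\alpha}e^{-xt}\Big)^2\,\|u\|^2 ,
\]
so the whole matter reduces to maximizing the scalar function $x\mapsto x^{\alpha}e^{-xt}$ over $x>0$. Its derivative vanishes at $x=\alpha/t$, yielding the maximal value $(\alpha/e)^{\alpha}\,t^{-\alpha}$, and hence \eqref{semi1} with $K=(\alpha/e)^{\alpha}$ (taking $K=1$ when $\alpha=0$). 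For the second estimate \eqref{semi2}, with $\alpha\in[0,1]$ the operator $A^{-\alpha}$ is bounded (indeed $\|A^{-\alpha}\|_{L(H)}=\lambda_1^{-\alpha}$ since $\lambda_1$ is the smallest eigenvalue), and by the same diagonalization
\[
\|A^{-\alpha}(I-e^{-tA})u\|^2=\sum_{j}\lambda_j^{-2\alpha}\bigl(1-e^{-\lambda_j t}\bigr)^2|\langle u,\phi_j\rangle|^2 .
\]
Here everything follows from the elementary bound $1-e^{-x}\le x^{\alpha}$, valid for all $x\ge0$ and all $\alpha\in[0,1]$, which I would prove by splitting at $x=1$: for $x\le1$ one has $1-e^{-x}\le x\le x^{\alpha}$ (using $x^{\alpha}\ge x$ when $x\le1$ and $\alpha\le1$), and for $x\ge1$ one has $1-e^{-x}\le1\le x^{\alpha}$. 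Applying this with $x=\lambda_j t$ gives $\lambda_j^{-\alpha}(1-e^{-\lambda_j t})\le t^{\alpha}$ for every $j$, and Parseval's identity then yields \eqref{semi2} with $K=1$.

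The only genuinely delicate point is the well-definedness in \eqref{semi1}, because $A^{\alpha}$ is unbounded when $\alpha>0$; the resolution is exactly the smoothing property of the analytic semigroup, i.e.\ that $e^{-tA}$ maps $H$ into $D(A^{\alpha})$ for every $t>0$. This is encoded precisely by the finiteness of $\sum_j\lambda_j^{2\alpha}e^{-2\lambda_j t}|\langle u,\phi_j\rangle|^2$, which the supremum estimate above guarantees; once this is in place, $A^{\alpha}e^{-tA}$ extends to a bounded operator on all of $H$ and the interchange of $A^{\alpha}$ with the convergent series is justified. Everything else is the routine one-variable calculus indicated above, so I expect no further obstacle.
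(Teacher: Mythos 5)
The paper itself contains no proof of this lemma: it is imported verbatim from the textbook of Lord, Powell and Shardlow \cite{MR3308418}, so there is no internal argument to compare against. Your spectral-calculus proof is correct and complete, and it is essentially the standard argument behind the cited result: diagonalize everything in the orthonormal eigenbasis, reduce the semigroup property to $e^{-\lambda_j(s+t)}=e^{-\lambda_j s}e^{-\lambda_j t}$, reduce \eqref{semi1} to the scalar maximization $\sup_{x>0}x^{\alpha}e^{-xt}=(\alpha/e)^{\alpha}t^{-\alpha}$, and reduce \eqref{semi2} to the elementary bound $1-e^{-x}\le x^{\alpha}$ for $x\ge 0$, $\alpha\in[0,1]$, which your split at $x=1$ proves correctly (using $1-e^{-x}\le x\le x^{\alpha}$ on $[0,1]$ and $1-e^{-x}\le 1\le x^{\alpha}$ on $[1,\infty)$). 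You also correctly identify and resolve the one genuinely nontrivial point, namely that $A^{\alpha}e^{-tA}$ is a priori only densely defined and that the uniform bound on the symbols $\lambda_j^{\alpha}e^{-\lambda_j t}$ is exactly what shows $e^{-tA}$ maps $H$ into $D(A^{\alpha})$ for $t>0$.

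One cosmetic remark: the lemma as stated assigns the positive eigenvalues $\lambda_j$ to $-A$, yet writes $A^{\alpha}$ acting with weights $\lambda_j^{\alpha}$; taken literally these two conventions clash. You silently adopt the only self-consistent reading, $A\phi_j=\lambda_j\phi_j$ with $A$ positive definite and $-A$ generating the semigroup, which is clearly what is intended (and what the application to the matrix $-E$ in Section \ref{numeric} uses), but it would be worth flagging that you are correcting the sign convention rather than following the statement word for word.
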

  \section{Convergence rate of the exponential Euler method}\label{convergence}
  This section is devoted to the convergence of the proposed numerical method \eqref{s3}. We first prove the following lemma.
\begin{lemma}\label{lemma}
Assume that\cref{ass:f,ass:b}  are satisfied and $\mathbb{E}|u_0|^2< \infty$. Then $$\sup_{0\leq t \leq T}\mathbb{E}|U_t|^2< \infty,$$ where $U_t$ is the solution of \eqref{s2}.
\end{lemma}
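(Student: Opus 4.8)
The plan is to start from the mild representation \eqref{s2} and bound $\mathbb{E}|U_t|^2$ by the sum of the second moments of its three constituents, using $(a+b+c)^2\le 3(a^2+b^2+c^2)$, and then to close the estimate with Gronwall's inequality. Throughout I would exploit the logarithmic-norm bound $|e^{A(t-s)}|\le e^{\mu[A](t-s)}$ together with \cref{ass:A}, which guarantees that $e^{\mu[A](t-s)}$ stays bounded by a constant $C$ (without loss of generality $C\ge 1$) uniformly for $t_0\le s\le t\le T$: indeed, if $\mu[A]\ge 0$ then $e^{\mu[A](t-s)}\le e^{\mu[A](T-t_0)}\le C$, while if $\mu[A]<0$ then $e^{\mu[A](t-s)}\le 1\le C$. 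This is precisely the device that keeps $|A|$ out of every estimate.

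For the two deterministic terms the bounds are routine. The initial-value term satisfies $|e^{A(t-t_0)}u_0|^2\le C^2|u_0|^2$, whose expectation is finite by the assumption $\mathbb{E}|u_0|^2<\infty$. For the drift term, Cauchy--Schwarz in the time variable gives $\bigl|\int_{t_0}^t e^{A(t-s)}f(s,U_s)\ds\bigr|^2\le (T-t_0)\int_{t_0}^t e^{2\mu[A](t-s)}|f(s,U_s)|^2\ds$, and after inserting the linear-growth bound from \cref{ass:f} and taking expectations this is controlled by $2C^2 D^2 (T-t_0)\int_{t_0}^t\bigl(1+\mathbb{E}|U_s|^2\bigr)\ds$.

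The genuinely nontrivial contribution is the stochastic term, and this is where I expect the main work to lie. Since the $B_i^H$ are independent, the cross terms vanish (cf.\ the Kronecker delta in \eqref{fBm}) and it suffices to estimate each $\mathbb{E}\bigl|\int_{t_0}^t e^{A(t-s)}b_i(s)\dB_i^{H}(s)\bigr|^2$ componentwise via the covariance identity of \cref{lemmacov}. This yields the double integral $H(2H-1)\int_{t_0}^t\int_{t_0}^t\langle e^{A(t-u)}b_i(u),e^{A(t-v)}b_i(v)\rangle\,|u-v|^{2H-2}\du\dv$, which I would majorize using Cauchy--Schwarz on the inner product, the log-norm bound, and the boundedness of $b_i$ from \cref{ass:b}, reducing it to $H(2H-1)C^2 M\int_{t_0}^t\int_{t_0}^t|u-v|^{2H-2}\du\dv$. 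The remaining purely analytic double integral evaluates to $(t-t_0)^{2H}/\bigl(H(2H-1)\bigr)$, so the whole stochastic term is bounded by the $t$-independent constant $mC^2 M(T-t_0)^{2H}$. It is here that the condition $H>\tfrac12$ is essential, as it is what makes $|u-v|^{2H-2}$ integrable.

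Collecting the three estimates puts $\mathbb{E}|U_t|^2$ into the form $\mathbb{E}|U_t|^2\le \alpha+\beta\int_{t_0}^t\mathbb{E}|U_s|^2\ds$ with constants $\alpha,\beta$ independent of $t$. Gronwall's inequality then yields $\mathbb{E}|U_t|^2\le \alpha\,e^{\beta(T-t_0)}$ for all $t\in[t_0,T]$, and taking the supremum gives the claim. The one point requiring care before invoking Gronwall is that $t\mapsto\mathbb{E}|U_t|^2$ must first be known to be locally bounded on $[t_0,T]$; I would secure this either from the a priori regularity of the solution guaranteed by \cite{MR2095071}, or, if a self-contained argument is preferred, by a standard truncation/stopping argument that renders the preceding estimates rigorous before passing to the limit.
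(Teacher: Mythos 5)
Your proposal follows essentially the same route as the paper's proof: the same splitting via $(a+b+c)^2\le 3(a^2+b^2+c^2)$, Cauchy--Schwarz for the drift term with the linear growth bound from \cref{ass:f}, the covariance/isometry property of the fractional Wiener integral together with \cref{ass:b} and the logarithmic-norm bound to control the noise term by a constant of order $mM(T-t_0)^{2H}$, and finally Gronwall's inequality. The only difference is that you explicitly address the local boundedness of $t\mapsto\mathbb{E}|U_t|^2$ needed to invoke Gronwall, a point the paper passes over silently; this is a welcome refinement rather than a different argument.
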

\begin{proof}
	By applying the inequality $ (a+b+c)^2\leq 3(a^2+b^2+c^2) $ and the Cauchy-Schwarz inequality, we obtain
	\begin{equation}\label{eq:13}
	\begin{split}
	|U_t|^2&\leq 3 \left\{|e^{A(t-t_0)}u_0|^2+\big|\int_{t_0}^te^{A(t-s)}f(s,U_s)\ds\big|^2+\big|\sum_{i=1}^m\int_{t_0}^te^{A(t-s)}b_i(s)\dB^{H}_i(s)\big|^2\right\}\\
	&\leq 3\left\{|e^{A(t-t_0)}|^2|u_0|^2+\int_{t_0}^t|e^{A(t-s)}|^2 \ds\int_{t_0}^t|f(s,U_s)|^2 \ds+\big|\sum_{i=1}^m\int_{t_0}^te^{A(t-s)}b_i(s)\dB^{H}_i(s)\big|^2\right\}.
	\end{split}
	\end{equation}
	From (\ref{boundExp}) we conclude
	 \begin{equation} \label{boundExp1}
	 \left|e^{A(t-s)}\right|^2\leq e^{2\mu[A](t-s)} ~~\text{ for }~t\geq s\geq t_0,
	 \end{equation}
	 therefore, by \cref{ass:f} and the inequality $(1+|U_s|)^2\leq 2(1+|U_s|^2)$ we get
	 \begin{equation}\label{eq:15}
	 \begin{split}
	 |U_t|^2&\leq 3 \left\{ e^{2\mu[A](t-t_0)}|u_0|^2+\int_{t_0}^t  e^{2\mu[A](t-s)} \ds\int_{t_0}^t2D^2\left(1+|U_s|^2\right) \ds+\big|\sum_{i=1}^m\int_{t_0}^te^{A(t-s)}b_i(s)\dB^{H}_i(s)\big|^2\right\}.
	 \end{split}
	 \end{equation}
By taking expectation and using the isometry property of fractional stochastic integrals\cref{lemmacov} we conclude
\begin{equation}
\begin{split}
\mathbb{E}|U_t|^2&
\begin{multlined}[t]
	\leq 3\left\{e^{2\mu[A](t-t_0)}\mathbb{E}|u_0|^2+2 D^2 \sup_{t_0\leq t \leq T}e^{2\mu[A](t-t_0)} \int_{t_0}^{T} \ds  \int_{t_0}^t \mathbb E \left(1+|U_s|^2\right) \ds               \right.\\+\left.\sum_{i=1}^m\int_{t_0}^t\int_{t_0}^t\left|\left(e^{A(t-u)}b_i(u)\right)^\top \left(e^{A(t-v)}b_i(v)\right)\right||u-v|^{2{H}-2}\du\dv\right\}
\end{multlined}\\&
\begin{multlined}[t]
\leq 3\left\{e^{2\mu[A](t-t_0)}\mathbb{E}|u_0|^2+2 D^2 \sup_{t_0\leq t \leq T}e^{2\mu[A](t-t_0)}) (T-t_0) \Big((T-t_0)+\int_{t_0}^t \mathbb{E} |U_s|^2\ds\Big)  \right.\\+\left.\sum_{i=1}^m\int_{t_0}^t\int_{t_0}^t\left|\left(e^{A(t-u)}b_i(u)\right)^\top \left(e^{A(t-v)}b_i(v)\right)\right||u-v|^{2{H}-2}\du\dv\right\}.
\end{multlined}
\end{split}
\end{equation}
Finally, applying \cref{ass:b} we obtain
\begin{equation}
  \mathbb{E}|U_t|^2\leq 3(\sup_{t_0\leq t \leq T}e^{2\mu[A](t-t_0)})\left\{ \mathbb{E}|u_0|^2+2 D^2 (T-t_0)^2 +2 D^2 (T-t_0) \int_{t_0}^t \mathbb{E} |U_s|^2\ds+mM(T-t_0)^{2H}\right\},
\end{equation}
and by using the Gronwall lemma, we get the result.
\end{proof}
\begin{lemma}\label{FracInt}\cite{MMVA}
Assume function $f$ is such that $\int_0^T |f(u)|^{\frac 1 H}\du$ is finite. Then there exists a positive constant $C_1$ such that
\begin{equation}
\int_0^T\int_0^T f(u)f(v)|u-v|^{2H-2} \du\dv \leq C_1 \Big(\int_0^T |f(u)|^{\frac 1 H}\du\Big)^{2H}.
\end{equation}
\end{lemma}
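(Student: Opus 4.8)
The plan is to recognize the double integral as a bilinear form associated with the one-dimensional Riesz kernel $|u-v|^{2H-2}$ and to derive the estimate from the Hardy--Littlewood--Sobolev (HLS) fractional-integration theorem. Since $\frac12<H<1$, the exponent satisfies $2H-2\in(-1,0)$, so the kernel is integrable but singular on the diagonal. Set $p=\frac1H$; its Hölder conjugate is $p'=\frac1{1-H}$, because $\frac1p+\frac1{p'}=H+(1-H)=1$. The hypothesis $\int_0^T|f(u)|^{1/H}\du<\infty$ is exactly $f\in L^p([0,T])$, and the right-hand side of the claim equals $C_1\|f\|_{L^p}^2$, so the goal is a bound of that form.

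First, since the kernel is nonnegative we may replace $f$ by $|f|$ throughout. Then I would apply Hölder's inequality in the outer variable,
\begin{equation*}
\int_0^T\int_0^T |f(u)||f(v)|\,|u-v|^{2H-2}\dv\du=\int_0^T|f(u)|\,(K|f|)(u)\du\leq\|f\|_{L^p}\,\|K|f|\|_{L^{p'}},
\end{equation*}
where $(Kg)(u)=\int_0^T g(v)|u-v|^{2H-2}\dv$ is the Riesz potential of order $\alpha=2H-1$ (so that the kernel exponent is $\alpha-1=2H-2$). It then suffices to show that $K$ maps $L^p([0,T])$ boundedly into $L^{p'}([0,T])$. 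The critical scaling relation $\frac1{p'}=\frac1p-\alpha$ holds here, since $\frac1p-\alpha=H-(2H-1)=1-H=\frac1{p'}$, which is precisely the exponent pairing at which HLS provides a bounded map.

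The remaining and genuinely substantial step is the endpoint estimate $\|K|f|\|_{L^{p'}}\leq C\|f\|_{L^p}$, which is the content of the HLS inequality. It does not follow from the ordinary Young convolution inequality, because on the critical line the kernel $|u-v|^{2H-2}$ lies only in the weak space $L^{1/(2-2H),\infty}$ and not in the strong space $L^{1/(2-2H)}$; one must instead extend $f$ by zero to all of $\mathbb{R}$ and invoke either the symmetric-decreasing rearrangement argument or a weak-type $(p,p')$ bound together with the Marcinkiewicz interpolation theorem. The resulting constant depends only on $H$; calling it $C_1$ and combining with the Hölder step yields the claim. This endpoint fractional-integration estimate is the main obstacle, which is why in the text we rely on the cited reference \cite{MMVA} rather than reproducing its proof.
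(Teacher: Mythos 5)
Your proposal is correct: the exponent bookkeeping is right ($p=1/H$, Riesz order $\alpha=2H-1$, conjugate $p'=1/(1-H)$, and the critical relation $\tfrac{1}{p'}=\tfrac1p-\alpha$ holds), the Hölder step is valid, and the reduction to the Hardy--Littlewood--Sobolev fractional integration theorem (after extending $f$ by zero to $\mathbb{R}$) gives exactly the claimed bound $C_1\|f\|_{L^{1/H}}^2$ with a constant depending only on $H$. The paper itself gives no proof of this lemma---it is quoted directly from the cited reference \cite{MMVA}---and that reference establishes the inequality by precisely this HLS argument, so your route is essentially the same as that of the paper's source.
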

We establish the following theorem, which will be used to prove the convergence rate of the exponential Euler method  \eqref{s3}.
\begin{theorem}\label{Reqularity}
Assume that \cref{ass:f,ass:A} are fulfilled and $\mathbb{E}|u_0|^2< \infty$ as well as $\mathbb{E}|Au_0|^2<\infty$.
	Then for every $\eps>0,$ there exists a constant $ C>0 $ such that
 	\begin{equation}
 	\left(\mathbb{E}|U_{\tone}-U_{\ttwo}|^2\right)^{\frac{1}{2}}\leq C(\ttwo-\tone)^{H-\eps},
 	\end{equation}
    for all $\tone, \ttwo \in [t_0,T] $, $\tone<\ttwo$, where $ U_{t} $ is the solution of SDE \eqref{s1} at time $ t$.
 \end{theorem}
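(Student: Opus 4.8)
The plan is to work from the mild representation \cref{s2} and estimate the increment $U_{\ttwo}-U_{\tone}$ (with $\delta:=\ttwo-\tone$) term by term. I split
\[
U_{\ttwo}-U_{\tone}=\bigl(e^{A(\ttwo-t_0)}-e^{A(\tone-t_0)}\bigr)u_0+J_f+J_B,
\]
where $J_f$ is the difference of the two drift integrals and $J_B$ the difference of the two stochastic integrals in \cref{s2}. Each of $J_f$ and $J_B$ is then written as the integral over the new subinterval $[\tone,\ttwo]$ plus the integral over $[t_0,\tone]$ of the semigroup difference $e^{A(\ttwo-s)}-e^{A(\tone-s)}$. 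Using $(\sum_{j}a_j)^2\le k\sum_j a_j^2$, it suffices to bound the $L^2$-norm of each of the five resulting pieces, and the claimed exponent will be the smallest one that appears.

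For the initial-value term I factor $e^{A(\ttwo-t_0)}-e^{A(\tone-t_0)}=(e^{A\delta}-I)e^{A(\tone-t_0)}$ and, commuting functions of $A$, write $(e^{A\delta}-I)u_0=A^{-1}(e^{A\delta}-I)Au_0$; then \cref{ass:A} gives $|A^{-1}(I-e^{A\delta})|\le L\delta$ and $|e^{A(\tone-t_0)}|\le e^{\mu[A](\tone-t_0)}\le C$, so with $\mathbb{E}|Au_0|^2<\infty$ this term is of order $\delta$ in $L^2$ --- this is exactly why the hypothesis $\mathbb{E}|Au_0|^2<\infty$ is needed. For $J_f$, on $[\tone,\ttwo]$ I bound $|e^{A(\ttwo-s)}|\le C$ and use the linear growth of \cref{ass:f} together with the uniform moment bound of \cref{lemma} to see that $\|f(s,U_s)\|$ is bounded, giving order $\delta$. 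On $[t_0,\tone]$ I use the fractional smoothing estimate (the finite-dimensional version of \cref{lemmalord}) $|e^{A(\ttwo-s)}-e^{A(\tone-s)}|=|A^{-\gamma}(I-e^{A\delta})\,A^{\gamma}e^{A(\tone-s)}|\le L^2\delta^{\gamma}(\tone-s)^{-\gamma}$ for $\gamma\in[0,1]$; since $\|f\|$ is merely bounded, any $\gamma<1$ keeps $\int_{t_0}^{\tone}(\tone-s)^{-\gamma}\ds$ finite and this piece is $O(\delta^{\gamma})$.

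The decisive term is $J_B$. For a deterministic $\mathbb{R}^{\SDEdim}$-valued kernel $g$, the isometry \cref{lemmacov} (applied componentwise; independence of the $B^H_i$ removes the cross terms) together with \cref{FracInt} yields $\mathbb{E}\bigl|\int g\,\dB^{H}_i\bigr|^2\le C\bigl(\int|g(s)|^{1/H}\ds\bigr)^{2H}$. On $[\tone,\ttwo]$, with $g(s)=e^{A(\ttwo-s)}b_i(s)$ and $|g(s)|\le C\sqrt{M}$ by \cref{ass:b}, this is $O(\delta^{2H})$, i.e.\ order $\delta^{H}$. The hard part is the semigroup-difference piece on $[t_0,\tone]$, where $g(s)=(e^{A(\ttwo-s)}-e^{A(\tone-s)})b_i(s)$: the same fractional bound gives $|g(s)|\le L^2\sqrt{M}\,\delta^{\gamma}(\tone-s)^{-\gamma}$, whence $\int_{t_0}^{\tone}|g(s)|^{1/H}\ds\le C\delta^{\gamma/H}\int_{t_0}^{\tone}(\tone-s)^{-\gamma/H}\ds$. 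This last integral is finite precisely when $\gamma/H<1$, i.e.\ $\gamma<H$ --- the singular kernel $|u-v|^{2H-2}$ forcing $\gamma<H$ rather than $\gamma<1$ is the crux, and it is what both produces the exponent $H$ and makes the loss of $\eps$ unavoidable with this method. Taking $\gamma=H-\eps$ keeps the integral finite and gives order $\delta^{H-\eps}$.

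Collecting the five pieces, the slowest is this noise semigroup-difference term of order $\delta^{H-\eps}$; since $\tone,\ttwo\in[t_0,T]$ we have $\delta\le T-t_0$, so the faster rates $\delta$, $\delta^{\gamma}$ and $\delta^{H}$ are all dominated by $\delta^{H-\eps}$ up to a constant depending on $T-t_0$, $L$, $C$, $M$, $D$ and the moments of $u_0$ and $Au_0$. Summing over $i=1,\dots,m$ and taking square roots gives $\|U_{\ttwo}-U_{\tone}\|\le C\delta^{H-\eps}$, as claimed. I expect the only real obstacle to be the balance in this last estimate: matching the smoothing exponent $\gamma$ from $A^{-\gamma}(I-e^{A\delta})$ against the integrability near $s=\tone$ dictated by \cref{FracInt}, which simultaneously yields the rate $H$ and the $\eps$-loss.
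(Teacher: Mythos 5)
Your proposal follows the paper's proof almost step for step: the same five-term decomposition of $U_{\ttwo}-U_{\tone}$ (initial-value term, drift and noise integrals over $[\tone,\ttwo]$, drift and noise semigroup-difference integrals over $[t_0,\tone]$), the same use of $\mathbb{E}|Au_0|^2<\infty$ through the factorization $A^{-1}(I-e^{A\delta})Au_0$, and, crucially, the same mechanism for the decisive noise term: an interpolated smoothing bound $|e^{A(\ttwo-s)}-e^{A(\tone-s)}|\le C\delta^{\gamma}(\tone-s)^{-\gamma}$ played off against the integrability requirement $\gamma/H<1$ coming from \cref{FracInt}, with the choice $\gamma=H-\eps$ producing both the rate and the $\eps$-loss; this is exactly the paper's estimate of its term $E_3$. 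The one place you genuinely deviate is how that smoothing bound is obtained: you invoke fractional powers $A^{\pm\gamma}$ via a finite-dimensional version of \cref{lemmalord}. That lemma, however, presupposes that $-A$ has a complete orthonormal eigensystem with positive eigenvalues, which is not implied by \cref{ass:A} --- and for non-normal stiff matrices (precisely the setting the paper targets) fractional-power bounds of this type may fail or carry constants comparable to $|A|$, which the paper is at pains to avoid. The paper obtains the identical inequality from \cref{ass:A} alone by elementary interpolation: writing $|X|=|X|^{\gamma}\,|X|^{1-\gamma}$ with $X=e^{A(\tone-s)}-e^{A(\ttwo-s)}=Ae^{A(\tone-s)}\cdot A^{-1}\bigl(I-e^{A(\ttwo-\tone)}\bigr)$, and bounding $|X|^{\gamma}\le\bigl(L^2\delta(\tone-s)^{-1}\bigr)^{\gamma}$ while $|X|^{1-\gamma}\le\bigl(2\sup_{t_0\le t\le T}e^{\mu[A](t-t_0)}\bigr)^{1-\gamma}$. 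Replacing your fractional-power step by this two-line interpolation makes your argument valid under the stated hypotheses; everything else in your proposal is correct and coincides with the paper's proof.
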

 \begin{proof}
 Throughout the proof let C be a constant that may depend on  $t_0$, $T$, $|u_0|$, $D$, $L$, $K$, $H$, $\mu(A)$, $C_1$, $\sum_{i=1}^m |b_i|$, $\mathbb{E}|Au_0|^2,$ $\eps$ and may change from line to line . By \eqref{s2} and the H\"older inequality, it holds
 	\begin{equation}
	\begin{split}
 	\mathbb{E}|U_{\tone}-U_{\ttwo}|^2
 	=&\mathbb{E}\Big|e^{A(\tone-t_0)}u_0+\int_{t_0}^{\tone}e^{A(\tone-s)}f(s,U_s)ds
 	+\sum_{i=1}^{m}\int_{t_0}^{\tone}e^{A(\tone-s)}  b_i(s)\dB_i^H(s)\\&-e^{A(\ttwo-t_0)}u_0-\int_{t_0}^{\ttwo}e^{A(\ttwo-s)}f(s,U_s)ds
 	-\sum_{i=1}^{m}\int_{t_0}^{\ttwo}e^{A(\ttwo-s)} b_i(s)\dB_i^H(s)\Big|^2
 	\\& \leq 5\Big(\mathbb{E}\Big|(e^{A(\tone-t_0)}-e^{A(\ttwo-t_0)})u_0\Big|^2 +\mathbb{E}\Big|\int_{t_0}^{\tone}\big(e^{A(\tone-s)}-e^{A(\ttwo-s)}\big)f(s,U_s)ds\Big|^2+\mathbb{E}\Big|\int_{\tone}^{\ttwo} e^{A(\ttwo-s)} f(s,U_s)ds\Big|^2\\&
 	+\mathbb{E}\Big|\sum_{i=1}^{m}\int_{t_0}^{\tone}\big(e^{A(\tone-s)}-e^{A(\ttwo-s)}\big) b_i(s)\dB_i^H(s)\Big|^2
 	+\mathbb{E}\Big|\sum_{i=1}^{m}\int_{\tone}^{\ttwo}e^{A(\ttwo-s)} b_i(s)\dB_i^H(s)\Big|^2 \Big)
 	 \\&
 	 =: 5 (E_0+E_1+E_2+E_3+E_4).
 	\end{split}
 		\end{equation}
For the first term, i.e.,\ $E_0$, by \cref{ass:A} we have
  \begin{equation}\label{E0}
  \begin{split}
  E_0&=\mathbb{E}\Big|(e^{A(\tone-t_0)}-e^{A(\ttwo-t_0)})u_0\Big|^2
  =
  \mathbb{E}\Big|e^{A(\tone-t_0)}(I-e^{A(\ttwo-\tone)})u_0\Big|^2\\&
  \leq  (\sup_{t_0 \leq t\leq T} e^{2\mu[A] (t-t_0)})\Big|A^{-1}(I-e^{A(\ttwo-\tone)})\Big|^2 \mathbb{E}|Au_0|^2\\&
  \leq C (\ttwo-\tone)^2.
  \end{split}
  \end{equation}
 		For $E_1$, by first taking the norm of the integrand and then using the H\"older inequality, \cref{ass:f,ass:A,lemma} we obtain
 		\begin{equation}\label{E1}
 			\begin{split}
 		 	E_1=&\mathbb{E}\Big|\int_{t_0}^{\tone}\big(e^{A(\tone-s)}-e^{A(\ttwo-s)}\big)f(s,U_s)ds\Big|^2\\&
 		 	\leq \int_{t_0}^{\tone}\big| e^{A(\tone-s)}-e^{A(\ttwo-s)}\big|\mathbb{E}|f(s,U_s)|^2\ds\int_{t_0}^{\tone}\big| e^{A(\tone-s)}-e^{A(\ttwo-s)}\big|\ds\\&
 		 	\leq C\int_{t_0}^{\tone}\big| e^{A(\tone-s)}-e^{A(\ttwo-s)}\big|(1+\mathbb{E}|U_s|^2)\ds\int_{t_0}^{\tone}\big| e^{A(\tone-s)}-e^{A(\ttwo-s)}\big|\ds\\&
 		 	\leq C\Big(\int_{t_0}^{\tone}\big| e^{A(\tone-s)}-e^{A(\ttwo-s)}\big|\ds\Big)^2\\&
\leq C \Big(\int_{t_0}^{\tone} \big| e^{A(\tone-s)}-e^{A(\ttwo-s)}\big|^{\eps} \big| e^{A(\tone-s)}-e^{A(\ttwo-s)}\big|^{1-\eps}  \ds\Big)^2\\&
 		 	\leq C(\sup_{t_0 \leq t\leq T} e^{2\eps \mu[A] (t-t_0)}) \Big(\int_{t_0}^{\tone} \big| e^{A(\tone-s)}-e^{A(\ttwo-s)}\big|^{1-\eps} \ds\Big)^2
 		 	\\& \leq  C \Big( \int_{t_0}^{\tone} \Big| A e^{A(\tone-s)}\Big|^{1-\eps} \Big
 		 	|A^{-1}\Big(I-e^{A(\ttwo-\tone)}\Big) \Big|^{1-\eps}\ds\Big)^2
 		 	\\& \leq  C \Big(\int_{t_0}^{\tone} (\tone-s)^{-1+\eps}(\ttwo-\tone)^{1-\eps}\ds\Big)^2
 		 	\\& \leq C  (\ttwo-\tone)^{2-2\eps}.
  		 	 		 \end{split}
 		 		\end{equation}
 		 		For $E_2$, by using  \cref{ass:f,lemma} we get
 		 		\begin{equation}\label{E2}
 		 		 			\begin{split}
 		 		 		 	E_2=&\mathbb{E}\Big|\int_{\tone}^{\ttwo} e^{A(\ttwo-s)}f(s,U_s)\ds\Big|^2\\&
 		 		 		 	\leq \mathbb{E} \Big(\int_{\tone}^{\ttwo}\big| e^{A(\ttwo-s)}\big| |f(s,U_s)|\ds\Big)^2\\&
 		 		 		 		\leq \mathbb{E} \Big(\int_{\tone}^{\ttwo}\big| e^{A(\ttwo-s)}\big|^2 \ds. \int_{\tone}^{\ttwo}| f(s,U_s)|^2\ds\Big)\\&
 		 		 		 	\leq 2(\ttwo-\tone) (\sup_{t_0 \leq t\leq T} e^{2 \mu[A] (t-t_0)})D^2 \int_{\tone}^{\ttwo} (1+\mathbb{E}|U_s|^2)\ds\\&
 		 		 		  \leq C (\ttwo-\tone)^2.
 		 		 		 	 	\end{split}
 		 		 		 		\end{equation}
For estimating $E_3$, by independence property of the fractional Brownian motions for $i\neq j,$ we have
\[
\mathbb{E} \left(\int_{t_0}^{\tone}\big(e^{A(\tone-s)}-e^{A(\ttwo-s)}\big) b_i(s)\dB_i^H(s) \int_{t_0}^{\tone}\big(e^{A(\tone-s)}-e^{A(\ttwo-s)}\big) b_j(s)\dB_j^H(s)\right)=0.
\]
Therefore by  H\"older inequality and  analogue to (\ref{fBm}), we can deduce
\begin{equation}
 		\begin{split}
 	E_3&=\mathbb{E}\Big|\sum_{i=1}^{m}\int_{t_0}^{\tone}\big(e^{A(\tone-s)}-e^{A(\ttwo-s)}\big) b_i(s)\dB_i^H(s)\Big|^2\\&
  =\sum_{i=1}^{m} \mathbb{E}\Big| \int_{t_0}^{\tone}\big(e^{A(\tone-s)}-e^{A(\ttwo-s)}\big) b_i(s)\dB_i^H(s)\Big|^2
 \\&  =\sum_{i=1}^{m} \int_{t_0}^{\tone} \int_{t_0}^{\tone} \Big| \Big(\big(e^{A(\tone-u)}-e^{A(\ttwo-u)}\big) b_i(u)\Big)^T \Big(\big(e^{A(\tone-v)}-e^{A(\ttwo-v)}\big) b_i(v)\Big)\Big||u-v|^{2H-2}\du\dv\\
 &\leq \sum_{i=1}^{m}\int_{t_0}^{\tone} \int_{t_0}^{\tone} \big|\big(e^{A(\tone-u)}-e^{A(\ttwo-u)}\big)\big| \big|\big(e^{A(\tone-v)}-e^{A(\ttwo-v)}\big)\big||b_i(u)||b_i(v)||u-v|^{2H-2}\du\dv.
   		 		 		 		 		\end{split}
 		 		 		 		\end{equation}
Similar to (\ref{E1}), by Assumption \ref{ass:b} we deduce
 		 		 		 	\begin{equation}\label{E33}
 		 		 		 		 		\begin{split}
 		 		 		 		 &E_3\\ &\leq mM   \int_{t_0}^{\tone} \int_{t_0}^{\tone} \Big| e^{A(\tone-u)}- e^{A(\ttwo-u)}\Big|^{1-(H-\eps)}
 		 		 		 		\Big| e^{A(\tone-u)}- e^{A(\ttwo-u)}\Big|^{H-\eps}
 		 		 		 		 \Big| e^{A(\tone-v)}- e^{A(\ttwo-v)}\Big|^{1-(H-\eps)}
 		 		 		\Big| e^{A(\tone-v)}- e^{A(\ttwo-v)}\Big|^{H-\eps}
 	 		 		\Big||u-v|^{2H-2}\du\dv \\& \leq
 		 		 		 		mM  (\sup_{t_0 \leq t\leq T} e^{2(1-(H-\eps)) \mu[A] (t-t_0)})\int_{t_0}^{\tone} \int_{t_0}^{\tone} \Big|A e^{A(\tone-u)}\Big|^{H-\eps} \Big|A^{-1}\Big(I-e^{A(\ttwo-\tone)} \Big)\Big| ^{H-\eps}
 		 		 	 		\Big|A e^{A(\tone-v)}\Big|^{H-\eps} \Big|A^{-1}\Big(I-e^{A(\ttwo-\tone)} \Big)\Big|^{H-\eps}
	 		 		 		\Big||u-v|^{2H-2}\du\dv.
 		 		 		 		 \end{split}
 		 		 		 	   	\end{equation}
 		 		 		 	   	Consequently,
 		 		\begin{equation}\label{E13}
 		 		 		 	   	 		\begin{split}
 		 		 		 	  	& E_3 \leq
 		 		 		 	   		mM(\sup_{t_0 \leq t\leq T} e^{(1+\eps) \mu[A] (t-t_0)})(\ttwo-\tone)^{2(H-\eps)} \int_{t_0}^{\tone} \int_{t_0}^{\tone} (\tone-u)^{\eps-H}
 		 		 		 	   	 		 		 	 	(\tone-v)^{\eps-H}
 		 		 		 	   		 		 		 		|u-v|^{2H-2}\du\dv,
 		 		 		 	   		 		 		  \end{split}
 		 		 		 	   	 		 		  	\end{equation}
 hence by Lemma \ref{FracInt} it follows that
 \begin{equation}\label{E3}
  		  E_3 \leq C (\ttwo -\tone)^{2(H-\eps)}.
 	 	   	\end{equation}
 Finally, for the last term by (\ref{fBm}) we have
\begin{equation}\label{E4}
 		 		\begin{split}
 		 		 		 		 	E_4=&\mathbb{E}\Big|\sum_{i=1}^{m}\int_{\tone}^{\ttwo}e^{A(\ttwo-s)} b_i(s)\dB_i^H(s)\Big|^2\\&
 		 		 		 		  		 		   = \sum_{i=1}^{m} \int_{\tone}^{\ttwo} \int_{\tone}^{\ttwo} \Big| \Big(e^{A(\ttwo-u)} b_i(u)\Big)^T \Big(e^{A(\ttwo-v)} b_i(v)\Big)\Big||u-v|^{2H-2}\du\dv
 		 		 		 		  		 		  \\& \leq  mM (\sup_{t_0 \leq t\leq T} e^{2 \mu[A] (t-t_0)})  \int_{\tone}^{\ttwo} \int_{\tone}^{\ttwo} |u-v|^{2H-2} \du\dv
 		 		 		 		  		 		    		  \\&  \leq C (\ttwo-\tone)^{2H}.
 		 		 		 		  		 		    	\end{split}
 		 		 		 		 		\end{equation}
 		From (\ref{E0}), (\ref{E1}), (\ref{E2}), (\ref{E3}) and (\ref{E4}) we conclude
 		 		 		 		 	\begin{equation}
 		 		 		 		 	 	\left(\mathbb{E}|U_{\tone}-U_{\ttwo}|^2\right)^{\frac{1}{2}}\leq C(\ttwo-\tone)^{H-\eps}.
 		 		 		 		 	 	\end{equation}
 		 		 		 		 	 	 		 		 		 		 	 \end{proof}
 \begin{theorem}\label{th:Convergence}
 Assume  $\mathbb{E}|u_0|^2< \infty$, $\mathbb{E}|Au_0|^2<\infty$ and  \cref{ass:f,ass:b,ass:A} are fulfilled.
 Then for every $\eps>0,$ there exists a constant $ C>0 $ such that
 	\begin{equation}\label{Conver}
 	\sup_{k=0, \dots, \tNum}\sqrt{ \mathbb{E}|U_{t_k}-V_k|^2}\leq C h_{max}^{H-\eps},
 	\end{equation}
    for all $ \tNum\geq 2 $, where $ U_{t_k} $ is the solution of SDE \eqref{s1} at time $ t_k$, and $ V_k $ is the numerical solution given by \eqref{s3} for $ k=0,1, \dots, \tNum $.
 \end{theorem}
 \begin{proof}
  \Cref{s3} implies
  \begin{equation}
  V_{k+1}=e^{A(t_{k+1}-t_0)}u_0+\sum_{l=0}^{k}\int_{t_l}^{t_{l+1}}e^{A(t_{k+1}-s)}f(t_l,V_l)\ds+\sum_{i=1}^m\int_0^{t_{k+1}}e^{A(t_{k+1}-s)}b_i(s)\dB^H_i(s).
  \end{equation}
Let
\begin{equation}
X_{k+1}= e^{A(t_{k+1}-t_0)}u_0+\sum_{l=0}^{k}\int_{t_l}^{t_{l+1}}e^{A(t_{k+1}-s)}f(t_l,U_{t_l})\ds+\sum_{i=1}^m\int_0^{t_{k+1}}e^{A(t_{k+1}-s)}b_i(s)\dB^H_i(s).
\end{equation} 	
By the triangle inequality, we obtain
\begin{equation}\label{triangle}
\|V_{k+1}-U_{t_{k+1}}\|\leq \|V_{k+1}-X_{k+1}\|+\|X_{k+1}-U_{t_{k+1}}\|.
\end{equation} 	
For the first term of the right-hand side of \eqref{triangle}, by Lipschitz condition \eqref{lip} we can write
\begin{equation}\label{firsterror}
\begin{split}
\|V_{k+1}-X_{k+1}\|&=\left\|\sum_{l=0}^{k}\int_{t_l}^{t_{l+1}}e^{A(t_{k+1}-s)}[f(t_l,V_l)-f(t_l,U_{t_l})]\ds\right\|\\
&\leq \sum_{l=0}^{k}\int_{t_l}^{t_{l+1}}|e^{A(t_{k+1}-s)}|\left\|f(t_l,V_l)-f(t_l,U_{t_l})\right\|\ds\\
&\leq \sup_{t_0\leq t\leq T}e^{\mu[A](t-t_0)}K\sum_{l=0}^{k}h_l\|V_l-U_{t_l}\|.
\end{split}
\end{equation}	
To estimate the second term of the right hand side of \eqref{triangle}, i.\,e.,\ $ \|X_{k+1}-U_{t_{k+1}}\| $, by H\"older inequality we have
\begin{equation*}
\begin{split}
 \|X_{k+1}-U_{t_{k+1}}\|^2&=\mathbb{E}|X_{k+1}-U_{t_{k+1}}|^2=\mathbb{E}\left|\sum_{l=0}^k\int_{t_l}^{t_{l+1}}e^{A(t_{k+1}-s)}[f(s,U_s)-f(t_l,U_{t_l})]\ds\right|^2
 \\& \leq (k+1)\sum_{l=0}^k \mathbb{E} \left|\int_{t_l}^{t_{l+1}}e^{A(t_{k+1}-s)}[f(s,U_s)-f(t_l,U_{t_l})]\ds\right|^2
 \\& \leq  (k+1) \sum_{l=0}^k h_l  \left[\int_{t_l}^{t_{l+1}}\big|e^{A(t_{k+1}-s)}\big|^2\mathbb{E}\big|f(s,U_s)-f(t_l,U_{t_l})\big|^2\ds\right].
\end{split}
\end{equation*}
Therefore \cref{ass:f,Reqularity} yield
\begin{equation}\label{seconderror}
\begin{split}
 \|X_{k+1}-U_{t_{k+1}}\|^2&
 \leq  C(k+1) \sup_{t_0\leq t\leq T}e^{2\mu[A](t-t_0)} \sum_{l=0}^k h_l  \left[\int_{t_l}^{t_{l+1}}\big[(s-t_l)^2+\mathbb{E}|U_s-U_{t_l}|^2\big]\ds\right].
 \\&
 \leq  C (k+1) \sum_{l=0}^k h_l  \left[\int_{t_l}^{t_{l+1}}\big[(s-t_l)^2+(s-t_l)^{2(H-\eps)}\big]\ds\right]
 \\&
 \leq  C (k+1)  \sum_{l=0}^k  h_l^{2+2H-2\eps}.
  \end{split}
\end{equation}
From (\ref{firsterror}) and (\ref{seconderror}) we conclude
\begin{equation}
\begin{split}
 \|V_{t_{k+1}}-U_{t_{k+1}}\|^2&
 \leq  C\sum_{l=0}^{k}h_l\|V_l-U_{t_l}\|^2+ C(k+1) \sum_{l=0}^k  h_l^{2+2H-2\eps}.
   \end{split}
\end{equation}
Thus by Gronwall lemma, we get
\begin{equation}
 \|V_{t_{k+1}}-U_{t_{k+1}}\|^2=\mathbb E|V_{t_{k+1}}-U_{t_{k+1}}|^2 \leq C(k+1) \sum_{l=0}^k   h_l^{2+2H-2\eps}\leq Ch_{\max}^{2H-2\eps} \sum_{l=0}^kh_l  \leq CT h_{\max}^{2H-2\eps}.
\end{equation}
Therefore
\begin{equation}
	\sup_{k=0, \dots, \tNum}\sqrt{ \mathbb{E}|U_{t_k}-V_k|^2}\leq C h_{max}^{H-\eps}.
\end{equation}
	 \end{proof}
\begin{remark}
At first glance, the condition $\mathbb{E}|Au_0|^2<\infty$ in \cref{th:Convergence} could be seen as quite restrictive due to $Au$ being stiff. However, in a situation where $A$ is a discretized differential operator in space, it will typically be fulfilled for sufficiently differentiable initial conditions.
\end{remark}
\section{Pathwise stability}\label{stability}
In this section, based on \cite{cheridito2003fractional, MR2524684}, we establish a brief summary of the fractional Ornstein-Uhlenbeck process. Pathwise stability of the exponential Euler scheme applied to SDEs with a Brownian motion has been investigated in \cite{buckwar11tns}. By extending that result we conclude the stability of the numerical scheme \eqref{s3} applied to the \SDEdim-dimensional SDE \eqref{s1} with m-dimensional fractional Brownian motion $ B^H(t) $.
\subsection{Fractional Ornstein-Uhlenbeck process}
Consider the scalar linear SDE
\begin{equation}
dX(t)=-\alpha X(t)dt+\sum_{i=1}^m b_i(t) \dB^{H}_{i}(t),
\end{equation}
where $  \alpha> 0 $. From \eqref{s2} we know that for a given random initial value $X_0 $ at $t_0$ it has
the explicit solution
\begin{equation}\label{o1}
X(t)=e^{-\alpha (t-t_0)}X_0+e^{-\alpha t}\sum_{i=1}^m \int_{t_0}^t e^{\alpha s}b_i(s)dB^H(s), \quad \text{almost surely}.
\end{equation}
The existence of the pathwise Riemann– Stieltjes integral  also follows from \cite{cheridito2003fractional}.
Taking the pathwise pullback limit $t_0\to-\infty$, we get
\begin{equation}\label{fou}
\hat{X}(t)= e^{-\alpha t}\sum_{i=1}^m \int_{-\infty}^{t}e^{\alpha s}b_i(s)d{B}^H_s, \quad \text{almost surely},
\end{equation}
which is called  a fractional Ornstein-Uhlenbeck process. By Lemma 1 in \cite{MR2524684}, this process is well defined.
$ \left(\hat{X}(t)\right)_{t\in\mathbb{R}} $ is a Gaussian process, and by stationarity property of the increments of the fractional Brownian motion, it is stationary \cite{cheridito2003fractional}. Further, with $X_0=\hat{X}(t_0)$, it is the solution of \eqref{o1}. Thus, the fractional Ornstein-Uhlenbeck process \eqref{fou} is a stochastic stationary solution of the linear test SDE \eqref{o1}.

Let $ X^1(t) $ and $ X^2(t) $ be any two solutions of SDE \eqref{o1}. Then it holds
\begin{equation}
X^1(t)-X^2(t)=e^{-\alpha (t-t_0)}(X_0^1-X_0^2)\rightarrow 0, \quad as~ t\rightarrow \infty, \quad \text{almost surely}.
\end{equation}
Replacing $ X^2(t) $ by the fractional Ornstein-Uhlenbeck process $ \hat{X}(t)  $, we see that it attracts all other solutions of SDE \eqref{o1} forward in time in the pathwise sense.
\subsection{Pathwise stability for exponential Euler method}
In this subsection, we are interested in the stability of the exponential Euler method \eqref{s3} approximating semi-linear SDEs of the form \eqref{s1}.
For simplicity of notation we assume $ h_0=h_1=\dots =h_{\tNum-1}=h $.

The following theorem, which was proven by \cite{buckwar11tns} for SDEs driven by Brownian motion, also extends to method \eqref{s3} approximating semi-linear SDEs of the form \eqref{s1}.
\begin{theorem}\label{th:4.1}
	Suppose that $ \mu[A] \leq 0  $ with respect to a suitable induced matrix norm, and $f$ satisfies the Lipschitz condition \eqref{lip} with Lipschitz constant $K$ and that
	\begin{equation}\label{condition}
	K|A||A^{-1}|<-\mu[A].
	\end{equation}
Then there exists  $ h^{\ast} > 0 $ such that the exponential Euler scheme \eqref{s3} has a unique stochastic stationary solution which is pathwise asymptotically stable for all $ h \in (0, h^\ast)$. In particular, such an $ h^\ast $ is given by the positive solution of
	\begin{equation}
	1+hK|A^{-1}||A|e^{h|A|-h\mu[A]}=e^{-h\mu[A]}.
	\end{equation}
\end{theorem}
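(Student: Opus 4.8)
The plan is to follow the random dynamical systems approach of \cite{buckwar11tns}, exploiting that the noise in \eqref{s3} is additive and state-independent: if we subtract two numerical trajectories $V_k^1,V_k^2$ driven by the \emph{same} realization of the fractional Brownian motions, the stochastic integrals cancel and the difference obeys the purely deterministic recursion
\begin{equation*}
V_{k+1}^1-V_{k+1}^2=e^{Ah}\bigl(V_k^1-V_k^2\bigr)+A^{-1}\bigl(e^{Ah}-I\bigr)\bigl[f(t_k,V_k^1)-f(t_k,V_k^2)\bigr].
\end{equation*}
Because this difference equation no longer sees the noise, the contraction analysis is identical to the Brownian case and independent of the Hurst parameter; the only place where $H\in(\tfrac12,1)$ enters is in guaranteeing that the pullback limit of the accumulated noise is well defined, which is exactly the content of the fractional Ornstein--Uhlenbeck discussion above together with Lemma~1 in \cite{MR2524684}.

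First I would establish the one-step contraction estimate. Taking the induced matrix norm, using $|e^{Ah}|\le e^{\mu[A]h}$ and the Lipschitz bound \eqref{lip} gives
\begin{equation*}
|V_{k+1}^1-V_{k+1}^2|\le\Bigl(e^{\mu[A]h}+K\,|A^{-1}(e^{Ah}-I)|\Bigr)|V_k^1-V_k^2|.
\end{equation*}
To control the middle factor I would use submultiplicativity, $|A^{-1}(e^{Ah}-I)|\le|A^{-1}|\,|e^{Ah}-I|$, together with the elementary bound $|e^{Ah}-I|\le e^{|A|h}-1\le |A|h\,e^{|A|h}$, so that the one-step factor is dominated by $q:=e^{\mu[A]h}+hK|A^{-1}||A|e^{|A|h}$. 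Multiplying the requirement $q<1$ by $e^{-\mu[A]h}\ge1$ turns it into
\begin{equation*}
1+hK|A^{-1}||A|e^{h|A|-h\mu[A]}<e^{-h\mu[A]},
\end{equation*}
whose boundary case is precisely the defining equation for $h^\ast$.

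Next I would check that $h^\ast$ is well defined and positive. Writing $g(h)=e^{-h\mu[A]}-1-hK|A^{-1}||A|e^{h|A|-h\mu[A]}$, one has $g(0)=0$ and $g'(0)=-\mu[A]-K|A||A^{-1}|$, which is strictly positive exactly by the standing hypothesis \eqref{condition}; hence $g(h)>0$ for small $h$, i.e.\ $q<1$. Since $|A|-\mu[A]>0$, the negative term dominates as $h\to\infty$ and $g(h)\to-\infty$, so by continuity $g$ possesses a smallest positive zero $h^\ast$, and $q<1$ holds for every $h\in(0,h^\ast)$.

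Finally, the contraction $q<1$ yields both pathwise asymptotic stability and, via the pullback construction, a unique stationary solution. For any two solutions $|V_k^1-V_k^2|\le q^{\,k}|V_0^1-V_0^2|\to0$, which is forward pathwise asymptotic stability. For existence I would take the pullback limit $t_0\to-\infty$ exactly as in the fractional Ornstein--Uhlenbeck discussion above: the same contraction shows that the trajectories started at earlier and earlier initial times form a pathwise Cauchy sequence, whose limit $\hat V_k$ is a random fixed point of the noise-dependent one-step map, with well-definedness of the noise part accumulated from $-\infty$ guaranteed by Lemma~1 in \cite{MR2524684}. Uniqueness is then immediate, since any two stationary solutions are in particular two solutions and their difference is bounded by $q^{\,k}$ times a fixed quantity, hence vanishes. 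The main obstacle is the one-step contraction estimate, and in particular choosing the bound $|A^{-1}(e^{Ah}-I)|\le|A^{-1}||A|h\,e^{|A|h}$ so that the resulting transcendental inequality reproduces exactly the stated equation for $h^\ast$; once this estimate is in place, the remaining stability and pullback arguments transfer verbatim from \cite{buckwar11tns}.
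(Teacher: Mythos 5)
Your proposal is correct and follows essentially the same route as the paper, whose proof simply defers to \cite[Proof of Theorem 3.2]{buckwar11tns} with the Ornstein--Uhlenbeck process replaced by its fractional version: your one-step contraction factor $e^{\mu[A]h}+hK|A^{-1}||A|e^{|A|h}$, obtained from the noise-free difference recursion, reproduces exactly the stated equation for $h^\ast$, and your pullback construction with the fractional Ornstein--Uhlenbeck process (via Lemma~1 of \cite{MR2524684}) is precisely the adaptation the paper intends. The only detail you leave implicit is uniqueness of the positive solution $h^\ast$, which follows from strict convexity of $h\mapsto e^{h\mu[A]}+hK|A^{-1}||A|e^{h|A|}$.
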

\begin{proof}
The proof follows exactly from \cite[Proof of Theorem 3.2]{buckwar11tns}, with the only difference being that the Ornstein-Uhlenbeck process is replaced by its fractional generalization \eqref{fou}.
\end{proof}
\section{Numerical experiment}\label{numeric}
We conclude this article with
 a numerical example.
 Consider the following stiff nonlinear system
 \begin{align*}
 dU_t&=\left(EU_t+\sin(U_t)\right)dt+b dB^H_t,\quad  t\in [0,1],\\
 U_0&=\sqrt{\frac2{n+1}}(\sin\frac{\pi}{n+1},\sin\frac{2\pi}{n+1},\dots,\sin\frac{n\pi}{n+1})^\top,
 \end{align*}
 where $ B^H(t) $ is a fractional Brownian motion with Hurst parameter $ \frac{1}{2}<H<1 $, $b$ is an $ n-$ dimensional vector given by
 \[
 b=
[1,~1,\dots,1]^\top
 \] and	$ E\in\mathbb{R}^{\SDEdim\times\SDEdim} $ is given by
 	\[
 	E= (n+1)^2\begin{bmatrix}
 	-2 & 1 & 0&0&\dots &0\\
 	1 & -2 &  1&0&\dots &0\\
 	0 & \ddots & \ddots & \ddots& \ddots&\vdots\\
 	\vdots & \ddots & \ddots & \ddots& \ddots& \vdots\\
 	0& \dots&0&1 &-2&1\\
 	0& \dots& \dots&0&1 &-2
 	\end{bmatrix}.
 	\]
 	Note that
 	\[|U_0|^2= \frac{2}{n+1}\sum_{k=1}^{n}\sin^2(\frac{k\pi}{n+1})=
 	\frac1{n+1}\sum_{k=0}^{n}\Re\left(1-e^{i2\frac{k\pi}{n+1}}\right)
 	= 1.\]
 	The matrix $-E$ has an orthonormal set of eigenfunctions, with eigenvalues
 	\begin{equation}
 	\lambda_k=(n+1)^2\Big(2-2\cos(\frac{k\pi}{n+1})\Big)>0,~k=1,\cdots,n
 	\end{equation}
 	where $\lambda_{k+1}\geq\lambda_k.$ Therefore, by a finite dimensional version of Lemma \ref{lemmalord} we conclude that \cref{ass:A} is satisfied.

    In the following we choose $n=100$ and apply the exponential Euler method with $ 16, 32, 64, 128$ and $256$ steps. \Cref{fig2} shows the root mean square errors obtained by averaging over $ 1000 $ independent fractional Brownian paths over $ [0,1] $ for $H=0.6, 0.7, 0.8$ and $0.9$. As a reference solution, we used here a numerical approximation with $N=2048$ steps. From these results, the orders of convergence $p$ of the method can be approximated as slopes of the regression lines in \cref{fig2} for the different $H$-values. The results indicate an order of convergence of 1, that is higher than what we proved in \cref{th:Convergence}, so might indicate that our theoretical result can be improved. As the result of the current proof basically is tight to the smoothness of the exact solution, this would require a completely different approach.

   Note that the classical Euler method could not be applied in this example, as due to its inherent instability the approximations \enquote{explode}, see \cref{fig:Euler}.

  \begin{figure}[!htp]
  	\begin{center}
  		\begin{tikzpicture}
  		\begin{axis}[
  		width=0.5\linewidth,
  		xmode=log,
  		log basis x=2,
  		log basis y=2,
  		ymode=log,
  		legend pos=south east,
  		xlabel=$h$
  		]
  		\pgfplotstableread{
  			xrow    yrow6    yrow7   yrow8   yrow9
        0.003906250000000   0.000086285679103   0.000045671006230   0.000025425724125   0.000015979470772
        0.007812500000000   0.000186727477482   0.000098711289640   0.000055323657431   0.000034739453129
        0.015625000000000   0.000391279542106   0.000207070005502   0.000116560145200   0.000073643376789
        0.031250000000000   0.000804347113296   0.000439108526317   0.000248115774387   0.000156296113494
        0.062500000000000   0.001683132275344   0.000940795871605   0.000537832319485   0.000356504920526
  		}\datatable
  		\addplot+[only marks,blue] table[x=xrow,y=yrow6] {\datatable};
  		\addplot[mark=none, blue,forget plot] table[x=xrow, y={create col/linear regression={y=yrow6}}]{\datatable};
  		\xdef\slopesix{\pgfplotstableregressiona}
  		\addlegendentry{$H=0.6$, $p=\pgfmathprintnumber{\slopesix}$}
  		\addplot+[only marks,blue] table[x=xrow,y=yrow7] {\datatable};
  		\addplot[mark=none, blue,forget plot] table[x=xrow, y={create col/linear regression={y=yrow7}}]{\datatable};
  		\xdef\slopeseven{\pgfplotstableregressiona}
  		\addlegendentry{$H=0.7$, $p=\pgfmathprintnumber{\slopeseven}$}
  		\addplot+[only marks,red] table[x=xrow,y=yrow8] {\datatable};
  		\addplot[mark=none, red,forget plot] table[x=xrow, y={create col/linear regression={y=yrow8}}]{\datatable};
  		\xdef\slopeeight{\pgfplotstableregressiona}
  		\addlegendentry{$H=0.8$, $p=\pgfmathprintnumber{\slopeeight}$}
  		\addplot+[only marks,green] table[x=xrow,y=yrow9] {\datatable};
  		\addplot[mark=none, green,forget plot] table[x=xrow, y={create col/linear regression={y=yrow9}}]{\datatable};
  		\xdef\slopenine{\pgfplotstableregressiona}
  		\addlegendentry{$H=0.9$, $p=\pgfmathprintnumber{\slopenine}$}
  		\end{axis}
  		\end{tikzpicture}\end{center}
  	\caption{Mean square errors of the exponential Euler method for different Hurst parameters $H$. $p$ denotes the respective numerically approximated
convergence order.} \label{fig2}
  \end{figure}
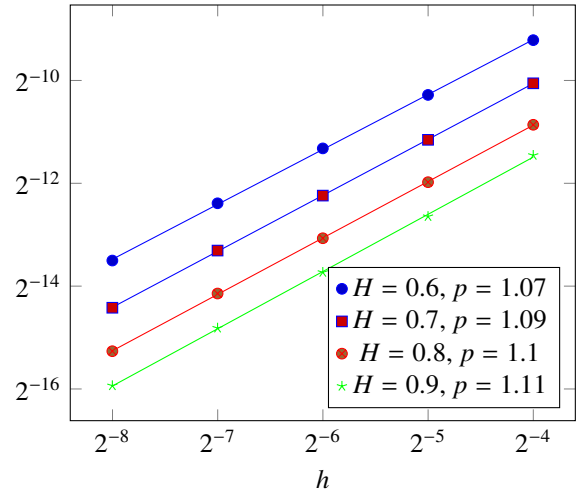

  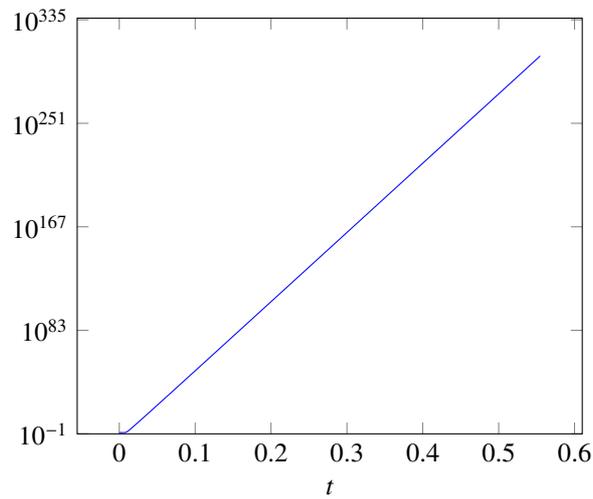
\begin{figure}[!htp]
  	\begin{center}
  		\begin{tikzpicture}
  		\begin{axis}[
  		width=0.5\linewidth,
  		ymode=log,
  		legend pos=south east,
  		xlabel=$t$,
        ymin=0.1
  		]
  		\pgfplotstableread{
  			t    v
     0.0000000e+00   1.0000000e+00
   3.9062500e-03   9.9710217e-01
   7.8125000e-03   1.0160481e+00
   1.1718750e-02   1.9279113e+01
   1.5625000e-02   2.2158478e+03
   1.9531250e-02   2.8073042e+05
   2.3437500e-02   3.7444758e+07
   2.7343750e-02   5.1575932e+09
   3.1250000e-02   7.2619248e+11
   3.5156250e-02   1.0389724e+14
   3.9062500e-02   1.5046633e+16
   4.2968750e-02   2.2000267e+18
   4.6875000e-02   3.2416623e+20
   5.0781250e-02   4.8069400e+22
   5.4687500e-02   7.1661187e+24
   5.8593750e-02   1.0731669e+27
   6.2500000e-02   1.6134092e+29
   6.6406250e-02   2.4338584e+31
   7.0312500e-02   3.6824725e+33
   7.4218750e-02   5.5863510e+35
   7.8125000e-02   8.4944804e+37
   8.2031250e-02   1.2943747e+40
   8.5937500e-02   1.9760958e+42
   8.9843750e-02   3.0220637e+44
   9.3750000e-02   4.6289234e+46
   9.7656250e-02   7.1003359e+48
   1.0156250e-01   1.0905592e+51
   1.0546875e-01   1.6770500e+53
   1.0937500e-01   2.5818395e+55
   1.1328125e-01   3.9789036e+57
   1.1718750e-01   6.1378569e+59
   1.2109375e-01   9.4767782e+61
   1.2500000e-01   1.4644332e+64
   1.2890625e-01   2.2647490e+66
   1.3281250e-01   3.5050250e+68
   1.3671875e-01   5.4282957e+70
   1.4062500e-01   8.4123953e+72
   1.4453125e-01   1.3044985e+75
   1.4843750e-01   2.0240469e+77
   1.5234375e-01   3.1422231e+79
   1.5625000e-01   4.8806818e+81
   1.6015625e-01   7.5847183e+83
   1.6406250e-01   1.1792428e+86
   1.6796875e-01   1.8342643e+88
   1.7187500e-01   2.8543440e+90
   1.7578125e-01   4.4435274e+92
   1.7968750e-01   6.9201989e+94
   1.8359375e-01   1.0781297e+97
   1.8750000e-01   1.6802665e+99
   1.9140625e-01  2.6195914e+101
   1.9531250e-01  4.0853663e+103
   1.9921875e-01  6.3733062e+105
   2.0312500e-01  9.9455655e+107
   2.0703125e-01  1.5524585e+110
   2.1093750e-01  2.4239939e+112
   2.1484375e-01  3.7858162e+114
   2.1875000e-01  5.9142507e+116
   2.2265625e-01  9.2416193e+118
   2.2656250e-01  1.4444441e+121
   2.3046875e-01  2.2581566e+123
   2.3437500e-01  3.5310560e+125
   2.3828125e-01  5.5226701e+127
   2.4218750e-01  8.6394170e+129
   2.4609375e-01  1.3517851e+132
   2.5000000e-01  2.1155141e+134
   2.5390625e-01  3.3113613e+136
   2.5781250e-01  5.1841437e+138
   2.6171875e-01  8.1175481e+140
   2.6562500e-01  1.2712991e+143
   2.6953125e-01  1.9913310e+145
   2.7343750e-01  3.1196785e+147
   2.7734375e-01  4.8881540e+149
   2.8125000e-01  7.6603147e+151
   2.8515625e-01  1.2006410e+154
   2.8906250e-01  1.8821004e+156
   2.9296875e-01  2.9507587e+158
   2.9687500e-01  4.6268380e+160
   3.0078125e-01  7.2559282e+162
   3.0468750e-01  1.1380418e+165
   3.0859375e-01  1.7851659e+167
   3.1250000e-01  2.8006100e+169
   3.1640625e-01  4.3941920e+171
   3.2031250e-01  6.8953525e+173
   3.2421875e-01  1.0821406e+176
   3.2812500e-01  1.6984764e+178
   3.3203125e-01  2.6661388e+180
   3.3593750e-01  4.1855475e+182
   3.3984375e-01  6.5715376e+184
   3.4375000e-01  1.0318721e+187
   3.4765625e-01  1.6204212e+189
   3.5156250e-01  2.5449083e+191
   3.5546875e-01  3.9972156e+193
   3.5937500e-01  6.2788966e+195
   3.6328125e-01  9.8638970e+197
   3.6718750e-01  1.5497167e+200
   3.7109375e-01  2.4349712e+202
   3.7500000e-01  3.8262409e+204
   3.7890625e-01  6.0129415e+206
   3.8281250e-01  9.4501146e+208
   3.8671875e-01  1.4853263e+211
   3.9062500e-01  2.3347515e+213
   3.9453125e-01  3.6702256e+215
   3.9843750e-01  5.7700226e+217
   4.0234375e-01  9.0718168e+219
   4.0625000e-01  1.4264035e+222
   4.1015625e-01  2.2429589e+224
   4.1406250e-01  3.5272026e+226
   4.1796875e-01  5.5471397e+228
   4.2187500e-01  8.7244258e+230
   4.2578125e-01  1.3722496e+233
   4.2968750e-01  2.1585264e+235
   4.3359375e-01  3.3955419e+237
   4.3750000e-01  5.3418021e+239
   4.4140625e-01  8.4041343e+241
   4.4531250e-01  1.3222823e+244
   4.4921875e-01  2.0805635e+246
   4.5312500e-01  3.2738808e+248
   4.5703125e-01  5.1519238e+250
   4.6093750e-01  8.1077482e+252
   4.6484375e-01  1.2760125e+255
   4.6875000e-01  2.0083205e+257
   4.7265625e-01  3.1610704e+259
   4.7656250e-01  4.9757435e+261
   4.8046875e-01  7.8325664e+263
   4.8437500e-01  1.2330257e+266
   4.8828125e-01  1.9411617e+268
   4.9218750e-01  3.0561350e+270
   4.9609375e-01  4.8117632e+272
   5.0000000e-01  7.5762885e+274
   5.0390625e-01  1.1929686e+277
   5.0781250e-01  1.8785442e+279
   5.1171875e-01  2.9582402e+281
   5.1562500e-01  4.6587001e+283
   5.1953125e-01  7.3369422e+285
   5.2343750e-01  1.1555378e+288
   5.2734375e-01  1.8200012e+290
   5.3125000e-01  2.8666678e+292
   5.3515625e-01  4.5154491e+294
   5.3906250e-01  7.1128260e+296
   5.4296875e-01  1.1204715e+299
   5.4687500e-01  1.7651294e+301
   5.5078125e-01  2.7807963e+303
   5.5468750e-01  4.3810524e+305
}\datatable
  		\addplot+[mark=none] table[x=t,y=v] {\datatable};
    		\end{axis}
  		\end{tikzpicture}\end{center}
  	\caption{Mean of 1000 simulated paths of the standard Euler method with $H=0.6$ and step length 1/256.} \label{fig:Euler}
\end{figure}

\end{document}